\documentclass[oneside, a4paper]{article}
\usepackage[hidelinks]{hyperref}
\usepackage{amsmath, amsthm, amssymb, amsfonts}
\usepackage[left=2.5cm,right=2.5cm,top=3cm,bottom=3cm,a4paper]{geometry}
\usepackage{thmtools}
\usepackage{mathtools}
\usepackage{xcolor}
\newtheorem*{theorem}{Theorem}
\linespread{1.3}

\usepackage{array}
\makeatletter
\newcommand{\thickhline}{%
    \noalign {\ifnum 0=`}\fi \hrule height 1pt
    \futurelet \reserved@a \@xhline
}
\newcolumntype{"}{@{\hskip\tabcolsep\vrule width 1pt\hskip\tabcolsep}}
\makeatother

\title{\textbf{\Large{Arithmetic functions commutable with sums of squares}}}
\author{\large{Jungin Lee}}
\date{}

\usepackage{fancyhdr}

\newcommand\shorttitle{Arithmetic functions commutable with sums of squares}
\newcommand\authors{Jungin Lee}

\fancyhf{}

\fancyhead[C]{%
\ifodd\value{page}
  \small\scshape\authors
\else
  \small\scshape\shorttitle
\fi
}
\pagestyle{fancy}

\begin{document}
\maketitle

\vspace{0mm}

\textbf{Abstract.} In this note, we characterize all functions $f : \mathbb{N} \rightarrow \mathbb{C}$ such that $f(x_1^2+ \cdots + x_k^2)=f(x_1)^2+ \cdots + f(x_k)^2$, where $k \geq 3$ and $x_1, \cdots, x_k$ are positive integers. 

\vspace{5mm}

\section{Introduction}

In 1996, Chung \cite{chu96} classified multiplicative functions satisfying $f(m^2+n^2)=f(m)^2+f(n)^2$ for all $m, n \in \mathbb{N}$. Ba$\check{s}$i$\acute{c}$ \cite{bas14} generalized this result to arbitrary arithmetic functions. Recently, Park (\cite{park15}, \cite{park16}) proved that for every integer $k \geq 3$, a multiplicative function $f$ satisfies $f(x_1^2+ \cdots + x_k^2)=f(x_1)^2+ \cdots + f(x_k)^2$ for all $x_1, \cdots, x_k \in \mathbb{N}$ is an identity function. We generalize Park's result to arithmetic functions, as Ba$\check{s}$i$\acute{c}$ generalized Chung's result. \\

\section{Results} \label{section2}

\begin{theorem}
Let $k \geq 3$ be an integer. If a function $f : \mathbb{N} \rightarrow \mathbb{C}$ satisfies $f(x_1^2+ \cdots + x_k^2)=f(x_1)^2+ \cdots + f(x_k)^2$ for every $x_1, \cdots, x_k \in \mathbb{N}$, then one of the following holds: \\
(1) $f \equiv 0$ \\
(2) $f(n)=\left\{\begin{matrix}
n \,\, (\exists x_1, \cdots, x_k \in \mathbb{N} \,\, \,n=x_1^2+ \cdots + x_k^2) \\
\pm n \,\, (otherwise) \;\;\;\;\;\;\;\;\;\;\;\;\;\;\;\;\;\;\;\;\;\;\;\;\;\;\;\;\;\;\;\;\;\;\;\;\;
\end{matrix}\right.$ \\
(3) $f(n)=\left\{\begin{matrix}
\frac{1}{k} \,\, (\exists x_1, \cdots, x_k \in \mathbb{N} \,\, \,n=x_1^2+ \cdots + x_k^2) \\
\pm \frac{1}{k} \,\, (otherwise) \;\;\;\;\;\;\;\;\;\;\;\;\;\;\;\;\;\;\;\;\;\;\;\;\;\;\;\;\;\;\;\;\;\;\;\;\;
\end{matrix}\right.$ \\
(In (2) and (3), each $\pm$ sign is independent.)
\end{theorem}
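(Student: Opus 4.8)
The plan is to reduce everything to the squared function $g := f^2$ and to exploit the fact that the sign of $f$ is invisible to the equation except at arguments that are genuinely sums of $k$ squares. Write $S \subseteq \mathbb{N}$ for the set of sums of $k$ positive squares. First I would observe that on the left-hand side the argument $x_1^2 + \cdots + x_k^2$ always lies in $S$, while on the right-hand side every value of $f$ occurs squared; hence for $n \notin S$ the value $f(n)$ enters the functional equation only through $f(n)^2 = g(n)$, so its sign may be chosen freely and independently. It therefore suffices to (i) determine $g(n)$ for all $n$, and (ii) show that for $n \in S$ the value $f(n)$ is forced, namely $f(n) = \sum_i g(x_i)$ for any representation $n = \sum_i x_i^2$.

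For step (i) the main tool is a family of $\mathbb{C}$-linear relations obtained by padding collisions of sums of squares with ones. If $\ell^2 + m^2 = p^2 + q^2$ with all entries positive, then applying the equation to $(\ell, m, 1, \ldots, 1)$ and to $(p, q, 1, \ldots, 1)$, both padded to length $k$, and subtracting gives $g(\ell) + g(m) = g(p) + g(q)$; the analogous three-square collisions give $g(a)+g(b)+g(c) = g(a')+g(b')+g(c')$ whenever $a^2+b^2+c^2 = a'^2+b'^2+c'^2$. Each such relation is satisfied by the functions $g \equiv 1$ and $g(n) = n^2$, hence by the whole family $g(n) = \lambda n^2 + \mu$, and my aim would be to prove the converse: that these relations force $g(n) = \lambda n^2 + \mu$ for some constants $\lambda, \mu \in \mathbb{C}$.

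To establish this I would argue by strong induction on $n$, using collision identities that express $g(n)$ through strictly smaller arguments. A convenient infinite supply comes from multiplying by $5 = 1^2 + 2^2$: for each $x$ one has $5(x^2+1) = (x+2)^2 + (2x-1)^2 = (2x+1)^2 + (x-2)^2$, which yields $g(2x+1) = g(x+2) + g(2x-1) - g(x-2)$ and so reduces every odd value; even values and the few small exceptions are handled by similar identities and by three-square collisions, for instance $10^2 + 1^2 + 1^2 = 7^2 + 7^2 + 2^2$. One checks directly that the affine form $\lambda n^2 + \mu$ is preserved by each such reduction, so once the finitely many base values are shown to obey one common affine law (with $\lambda, \mu$ read off from $g(1)$ and $g(2)$), the induction yields $g(n) = \lambda n^2 + \mu$ for all $n$.

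With $g$ pinned down, I would close the argument using the genuine nonlinear content of the equation. For any $n = \sum_i x_i^2 \in S$ we get $f(n) = \sum_i g(x_i) = \lambda n + k\mu$, while $g(n) = f(n)^2$ forces $(\lambda n + k\mu)^2 = \lambda n^2 + \mu$. Since $S$ is infinite, containing $n^2 + (k-1)$ for every $n$, this is a polynomial identity in $n$, so $\lambda^2 = \lambda$, $\lambda\mu = 0$, and $k^2\mu^2 = \mu$; the only solutions are $(\lambda,\mu) \in \{(0,0),\,(1,0),\,(0,1/k^2)\}$. These correspond respectively to $f \equiv 0$, to $g(n) = n^2$ (so $f(n) = n$ on $S$ and $f(n) = \pm n$ otherwise), and to $g \equiv 1/k^2$ (so $f(n) = 1/k$ on $S$ and $f(n) = \pm 1/k$ otherwise), which are exactly the three cases; note also that the squares $n^2$ and $1/k^2$ are nonnegative reals, so each $f(n)$ is automatically real. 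I expect the main obstacle to be step (i): guaranteeing that for every $n$ — and especially for $k = 3$, where $S$ is sparse and fewer collisions are available — there is a collision identity reducing $g(n)$ to smaller arguments, together with the finite but delicate verification that the base values admit no freedom beyond $\lambda$ and $\mu$.
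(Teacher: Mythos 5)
Your overall architecture---passing to $g = f^2$, observing that the sign of $f(n)$ for $n \notin S$ is invisible to the equation, pinning down $g$, then recovering $f$ on $S$---is sound, and your final step is correct: the identity $(\lambda n + k\mu)^2 = \lambda n^2 + \mu$ on the infinite set $S$ does force $(\lambda,\mu) \in \{(0,0),\,(1,0),\,(0,1/k^2)\}$. The genuine gap is step (i), and for $k=3$ it is not merely left incomplete but is impossible as stated: purely linear collision relations can never force $g(n) = \lambda n^2 + \mu$. The obstruction is a parity invariant. Odd squares are $\equiv 1$ and even squares $\equiv 0 \pmod 4$, so if $n = a^2+b^2+c^2$ then the number of odd entries among $a,b,c$ equals the residue of $n$ modulo $4$; consequently \emph{every} identity $a^2+b^2+c^2 = a'^2+b'^2+c'^2$ has the same number of even entries on both sides. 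For $k=3$, every linear relation you can extract from the functional equation (including your padded two-square collisions and your example $10^2+1^2+1^2 = 7^2+7^2+2^2$) is of this form. Hence for any constant $c$, modifying $g$ by adding $c$ at every even argument preserves all of your linear relations, so the solution space of those relations is at least three-dimensional, and no induction on collision identities, however organized, can rule out $c \neq 0$: the base values genuinely have a third degree of freedom at the linear level.

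To kill this extra parameter you must invoke the nonlinear relations, namely $g(m) = f(m)^2$ where $m$ itself lies in $S$, so that $f(m)$ is a polynomial in smaller $g$-values: e.g.\ $f(3) = 3g(1)$ gives $g(3) = 9g(1)^2$, and $f(6) = g(2) + 2g(1)$ gives $g(6) = \bigl(g(2)+2g(1)\bigr)^2$, which ties even values to odd ones multiplicatively. This is exactly how the paper proceeds: it uses a single collision recursion ($2n^2+9 = (n+2)^2+(n-2)^2+1^2 = n^2+n^2+3^2$ for $k=3$, with an analogue for $k \geq 4$) only to reduce everything to the finitely many unknowns $g(1),\dots,g(4)$, and then solves a \emph{nonlinear} polynomial system among those unknowns. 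Note that for $k \geq 4$ your parity obstruction disappears (e.g.\ $5^2+1^2+1^2+1^2 = 4^2+2^2+2^2+2^2$ trades four odd entries for none), so a linear-first strategy may be salvageable there; but even in that case the ``delicate verification'' on the base values that you defer is the actual mathematical content, and for $k=3$ it provably cannot be done without nonlinear input.
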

\begin{proof}
Denote $f(n)^2$ by $A_n$. If $k=3$, then
\begin{equation} \label{eq1}
A_{n+2}+A_{n-2}+A_1=f(2n^2+9)=2A_n+A_3 \:\, (n \geq 3)
\end{equation}
so it is enough to show that (1), (2) or (3) holds for $n \leq 4$. If $k \geq 4$, then
\begin{equation} \label{eq2}
A_{n+1}+A_{n-1}+2A_2+(k-4)A_1=f(2n^2+k+6)=2A_n+(k-3)A_1+A_3 \:\, (n \geq 3)
\end{equation}
 so it is enough to show that (1), (2) or (3) holds for $n \leq 3$. For convenience, we denote $A_1, \cdots, A_5$ by $A, B, C, D$ and $E$. \\

\textbf{Case I.} $k=3$ 
\begin{subequations} \label{eq3}
\begin{gather}
C=(3f(1)^2)^2=9A^2 \label{eq3a} \\
C+2D=f(3)^2+2f(4)^2=f(41)=f(6)^2+f(1)^2+f(2)^2=(2A+B)^2+A+B \label{eq3b} \\
2A+(3B)^2=2f(1)^2+f(12)^2=f(146)=f(11)^2+f(3)^2+f(4)^2=(2A+C)^2+C+D \label{eq3c} \\
\begin{multlined}
(A+2C)^2+C+A=f(19)^2+f(3)^2+f(1)^2=f(371) \\
=f(17)^2+f(9)^2+f(1)^2=(2B+C)^2+(A+2B)^2+A 
\end{multlined} \label{eq3d} \\
E+2A=f(5)^2+2f(1)^2=f(27)=3f(3)^2=3C \label{eq3e} \\
E+2B=f(5)^2+2F(2)^2=f(33)=f(1)^2+2f(4)^2=A+2D  \label{eq3f} \\
\begin{multlined}
(2A+C)^2+A+B=f(11)^2+f(1)^2+f(2)^2=f(126) \\
=f(9)^2+f(6)^2+f(3)^2 =(A+2B)^2+(2A+B)^2+C  
\end{multlined} \label{eq3g}
\end{gather}
\end{subequations} \\
\noindent From the equations \eqref{eq3a}, \eqref{eq3b}, \eqref{eq3e} and \eqref{eq3f}, we can obtain $(B-4A)(B+8A-1)=0$. \\

(i) $B=4A$ : If we substitute $B=4A$ and $C=9A^2$ to the equation \eqref{eq3d}, we obtain $27A^2(A-1)(9A+5)=0$. If $A=0$, $B=C=0$ and $D=0$ by \eqref{eq3b}. If $A=1$, $B=4, C=9$ and $D=16$ by \eqref{eq3b}. If $A=-\frac{5}{9}$, $B=-\frac{20}{9}, C=\frac{25}{9}$ and this contradicts to \eqref{eq3g}. 

(ii) $B=1-8A$ : If we substitute $B=1-8A$ and $C=9A^2$ to the equations \eqref{eq3d} and \eqref{eq3g}, we obtain $(9A-1)(27A^3+39A^2-52A+8)=0$ and $(9A-1)(9A^3+5A^2-29A+4)=0$, respectively. Thus $A=\frac{1}{9}$, $B=C=\frac{1}{9}$, and $D=\frac{1}{9}$ by \eqref{eq3b}. \\

\noindent \textbf{Case II.} $k=4$ 
\begin{subequations} \label{eq4}
\begin{gather}
\begin{multlined}
(A+3B)^2+(2A+2B)^2+A+B=f(13)^2+f(10)^2+f(1)^2+f(2)^2 \\
=f(274)=f(16)^2+f(4)^2+2f(1)^2=(4B)^2+(4A)^2+2A
\end{multlined} \label{eq4a} \\
\begin{multlined}
(4B)^2+3B=f(16)^2+3f(2)^2=f(268) \\
=f(13)^2+2f(7)^2+f(1)^2=(A+3B)^2+2(3A+B)^2+A
\end{multlined} \label{eq4b} \\
(3A+B)^2+3A=f(7)^2+3f(1)^2=f(52)=3f(4)^2+f(2)^2=48A^2+B \label{eq4c} \\
E+3A=f(5)^2+3f(1)^2=f(28)=3f(3)^2+f(1)^2=3C+A \label{eq4d} \\
E+3B=f(5)^2+3f(2)^2=f(37)=2f(4)^2+f(1)^2+f(2)^2=32A^2+A+B \label{eq4e}
\end{gather}
\end{subequations} \\
\noindent From the equation \eqref{eq4a}, we can obtain $(A-B)(11A-3B+1)=0$. \\

(i) $B=A$ : If we substitute $B=A$ to the equation \eqref{eq4b}, we obtain $32A^2=2A$, so $A$ is $0$ or $\frac{1}{16}$. By the equations \eqref{eq4d} and \eqref{eq4e}, $(A,B,C)=(0,0,0), (\frac{1}{16}, \frac{1}{16}, \frac{1}{16})$. 

(ii) $B=\frac{11A+1}{3}$ : If we substitute $B=\frac{11A+1}{3}$ to the equations \eqref{eq4b} and \eqref{eq4c}, we obtain $(A-1)(160A+14)=0$ and $(A-1)(16A-1)=0$, respectively. Thus $A=1$, $B=4$ and $C=9$ by \eqref{eq4d} and \eqref{eq4e}. \\

\noindent \textbf{Case III.} $k \geq 5$ 
\begin{subequations} \label{eq5}
\begin{gather}
2(A-B)^2+A+2B=3C \label{eq5a} \\
5A+3C=8B \label{eq5b} 
\end{gather}
\end{subequations} \\
\noindent From $(kA)^2+((k-2)A+2B)^2+2B+(k-4)A=f(k)^2+f(k+6)^2+2f(2)^2+(k-4)f(1)^2=f(2k^2+13k+40)=2f(k+3)^2+3f(3)^2+(k-5)f(1)^2=2((k-1)A+B)^2+3C+(k-5)A$, we obtain \eqref{eq5a}. From $(D+4A)+(A+3C)=f(20)+f(28)=5B+(D+3B)$, we obtain \eqref{eq5b}. From the equations \eqref{eq5a} and \eqref{eq5b}, we obtain $2(A-B)(A-B+3)=0$. \\

(i) $B=A$ : By the equation \eqref{eq5b}, $C=A$ and by the equation \eqref{eq2}, $A_n=A$ for every $n \in \mathbb{N}$. Then $A=A_k=(kf(1)^2)^2=k^2A^2$, so $A$ is $0$ or $\frac{1}{k^2}$.  

(ii) $B=A+3$ : By the equation \eqref{eq5b}, $C=A+8$ and by the equation \eqref{eq2}, $A_n-n^2=A-1$ for every $n \in \mathbb{N}$. Then $((k-1)A+(A+3))^2-(k+3)^2=A_{k+3}-(k+3)^2=A_k-k^2=(kA)^2-k^2$, so $A=1$. 
\end{proof}

\vspace{0.5cm}

Department of Mathematics, Pohang University of Science and Technology, Pohang 790-784, Korea 

e-mail: moleculesum@postech.ac.kr

\end{document}